\newtheorem{theorem}{Theorem}[]
\newtheorem{lemma}{Lemma}[]
\theoremstyle{definition}
\newtheorem{remark}[]{Remark}[]
\newtheorem{note}[]{Note}[]
\DeclareMathOperator{\conv}{conv}
\DeclareMathOperator{\lin}{lin}
\title{On the volume growth of the hyperbolic regular $n$-simplex }
\author{\'Akos G.Horv\'ath}
\address {Department of Geometry \\
Budapest University of Technology and Economics\\
H-1521 Budapest\\
Hungary}
\email{ghorvath@math.bme.hu}
\date{Jan. 15, 2016}
\begin{document}

\subjclass[2010]{51F10, 52B10}
\keywords{half-space model, hyperbolic orthogonal coordinates, hyperbolic volume inequality, projective model}

\begin{abstract}
In this paper we give lower and upper bounds for the volume growth of a regular hyperbolic simplex, namely for the ratio of the $n$-dimensional volume of a regular simplex and the $(n-1)$-dimensional volume of its facets. In addition to the methods of U. Haagerup and M. Munkholm we use a third volume form is based on the hyperbolic orthogonal coordinates of a body. In the case of the ideal, regular simplex our upper bound gives the best known upper bound. On the other hand, also in the ideal case our general lower bound, improved the best known one for $n=3$.
\end{abstract}

\maketitle

\section{Introduction}

\subsection{The problem}
The volume growth of the Euclidean regular $n$-simplex inscribed in the unit sphere can be calculated by the volume form
\begin{equation}\label{euclideangrowth}
V_n(S)=\frac{m}{n}V_{n-1}(F),
\end{equation}
where $V_n(S)$, $V_{n-1}(F)$ and $m$ means the $n$-dimensional volume of the simplex, the $(n-1)$-dimensional volume of its facet $F$ and the height of the simplex, respectively. Thus we have $V_{n}(S)/V_{n-1}(F)=(n+1)/n^2$. If $r$ is the radius of the circumscribed sphere of the regular simplex $rS$ then by similarity the ratio $V_{n}(rS)/V_{n-1}(rF)$ is equal to $r(n+1)/n^2$. In summary, the Euclidean "volume growth" is not an interesting quantity, it can be determined exactly and easily.

In contrast to the Euclidean case in hyperbolic $n$-space there is at least one kind of regular simplices which volume growth is very important. This is the regular, ideal simplex. In hyperbolic $n$-space a simplex is called \emph{regular} if any permutation of its vertices can be induced by an isometry of the space. A simplex \emph{ideal} if all the vertices are on the sphere at infinity. There is only, up to isometry, only one ideal regular simplex in hyperbolic $n$-space. It is known that the volume of a hyperbolic simplex is finite also if some of the vertices are on the sphere at infinity. Contrary the case of the plane, there is non-congruent ideal simplices arising an interesting question: Which one has maximal volume? The answer that \emph{ a simplex is of maximal volume if and only if it is ideal and regular} was conjectured (for any $n$) by Thurston in \cite{thurston}. In three space it was proved by Milnor (see in \cite{milnor} or \cite{milnor 2}). In higher dimension spaces this result was published by U. Haagerup and H. Munkholm in \cite{haagerupmunkholm}. Their prove based on the "volume growth" of an ideal, regular simplex,  Proposition 2 in \cite{haagerupmunkholm} says that $(n-2)/(n-1)^2\leq V_{n}(S)/V_{n-1}(F)\leq 1/(n-1)$. These bounds relies on an interplay between the volume forms of the Poincare's half-space model and the volume form of the Cayley-Klein's  projective model using also Gauss' divergence formula. Unfortunately, in hyperbolic $n$-space there is no formula analogous to (\ref{euclideangrowth}), hence to compare the $n$-volume of a regular simplex and the $(n-1)$-volume of its facets is a non-trivial exercise.

In recent decades the analytic investigations in hyperbolic space and even more the examination of computational methods of the $n$-dimensional hyperbolic volume appreciated. Without mentioning the exhaustive list I suggest to study the references \cite{bolyai, lobacsevszkij, kellerhals, cho, gho 1, gho 2, gho 3, gho 4, mednykh, murakami}. In this paper we give estimates for the volume growth of a regular hyperbolic simplex in the above detailed sense. In additions the methods of Haagerup and Munkholm we use a third volume form based on hyperbolic orthogonal coordinates. In the case of the ideal, regular simplex our upper bound gives the best known upper bound proved by Haagerup and Munkholm in 1981. On the other hand, also in the ideal case our general lower bound, improved the best known one for $n=3$.

\subsection{Notation}

We use the following notation in this paper:

\begin{itemize}
\item $\mathbb{R}$,$E^n$ and $H^n$: the set of real numbers, the Euclidean $n$-space and the hyperbolic $n$-spaces, respectively,
\item $|\cdot|$,$\rho(\cdot,\cdot)$: the Euclidean length and the hyperbolic distance function, respectively,
\item $V(\cdot)$: the volume function of the hyperbolic space,
\item $P^n$, $h:H^n\rightarrow P^n$: the half-space model of $H^n$ (Poincare's second model) and the standard mapping sending the hyperbolic space onto the model, respectively,
\item $H$: the boundary hyperplane of $P^n$,
\item $CK^n$ and $p:H^n \rightarrow CK^n$: the projective model of $H^n$ (Cayley-Klein's model) and the standard mapping on the hyperbolic space to the model
\item $x_1,\ldots,x_n$: the Euclidean coordinates of the embedding Euclidean space $E^n$ with respect to an orthonormed basis in $E^n$,
\item $S(n)=\conv\{E_1,\ldots,E_{n+1} \}$: the $n$-dimensional regular simplex inscribed in the unit sphere,
\item $\tau[n,t]=\conv\{p^{-1}(E_1),\ldots,p^{-1}(E_{n+1}) \}$: the hyperbolic regular $n$-simplex with hyperbolic circumradius $r(t)=\tanh^{-1}(\sin t )$,
\item $\tau_i[n,t]=\conv\{p^{-1}(E_1),\ldots,p^{-1}(E_{i-1}),p^{-1}(E_{i+1}),\ldots p^{-1}(E_{n+1}) \}$: a facet of $\tau[n,t]$, it is a hyperbolic, regular simplex of dimension $n-1$.
\item $r_k$: the hyperbolic radius of the circumscribed sphere of the $k$-dimensional faces of $\tau[n,t]$,
\item $d_k$: the hyperbolic distance of the circumcenter of $\tau[n,t]$ and a $(k-1)$-dimensional face of $\tau[n,t]$.
\end{itemize}

\section{The theorem}

Denote by $\tau[n,t]$ and $\tau_i[n,t]$ the $n$-dimensional regular simplex inscribed in the sphere of radius
$ r_n:=\tanh^{-1}(\sin t)$
and its $i^{th}$ facet, which is a regular simplex of dimension $(n-1)$ inscribed in a sphere of radius
$$
r_{n-1}:=\tanh^{-1}\left(\frac{\sin t\sqrt{1-\frac{1}{n^2}}}{\sqrt{1-\sin^2t\left(\frac{1}{n^2}\right)}}\right),
$$
respectively. In this paper we prove the following theorem:
\begin{theorem}\label{thmvolumegrowth}
We have the following two inequalities
\begin{equation}\label{ineqonvolumegrowthl}
(n+1)\left(\frac{1}{2}\right)^{n-1}\frac{\sqrt{1-\sin^2 t\frac{n-1}{2n}}}{\sqrt{1-\frac{n-1}{2n}}\sqrt{n^2-\sin^2 t}}(\cos t)\tanh^{-1}\left(\frac{\sin t\sqrt{1-\frac{n-1}{2n}}}{\sqrt{1-\sin^2 t\frac{n-1}{2n}}}\right)\leq \frac{V(\tau[n,t])}{V(\tau_{i}[n,t])}
\end{equation}
and
\begin{equation}\label{ineqonvolumegrowthu}
\frac{V(\tau[n,t])}{V(\tau_{i}[n,t])} \leq \frac{1}{n-1}\left(1-\left(\sqrt{\frac{n^2(1-\sin t)^2(1+\sin t)}{(n+\sin t)^2(1+\sin t)-(n^2-1)\sin ^2 t(1-\sin t)^2}}\right)^{n-1}\right).
\end{equation}
\end{theorem}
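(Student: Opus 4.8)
The plan is to realize $\tau[n,t]$ concretely as $\conv\{\sin t\cdot E_1,\dots,\sin t\cdot E_{n+1}\}$ in the projective model, so that the vertices sit at Euclidean distance $\sin t$ and the hyperbolic circumradius is $r_n=\tanh^{-1}(\sin t)$, and then to estimate the two hyperbolic volumes through two different volume forms: the interplay of the half-space and projective models together with Gauss' divergence formula for the upper bound, and the orthogonal-coordinate volume form for the lower bound. The preliminary step, used throughout, is to record the metric data of the configuration from the Klein-model distance formula $\cosh\rho(u,v)=\frac{1-u\cdot v}{\sqrt{1-|u|^2}\sqrt{1-|v|^2}}$ and the inner products $E_i\cdot E_j=-\frac1n$. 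In particular one checks that $r_{n-1}$ is as stated and that the distance from an edge midpoint to an endpoint equals $\tanh^{-1}\!\big(\sin t\sqrt{\tfrac{n+1}{2n}}/\sqrt{1-\sin^2 t\,\tfrac{n-1}{2n}}\big)$, which is exactly the $\tanh^{-1}$ factor appearing in \eqref{ineqonvolumegrowthl}.

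For the upper bound I would follow Haagerup and Munkholm. Place $\tau[n,t]$ in the half-space model $P^n$ so that the facet $\tau_i[n,t]$ lies on a Euclidean hemisphere of radius $R$ centred on the boundary $H$, with the opposite vertex above it, and write the hyperbolic volume as an iterated integral over the vertical projection $T\subseteq H$ of the simplex, $V(\tau[n,t])=\int_T\int_{\beta(y)}^{\gamma(y)}x_n^{-n}\,dx_n\,dy$, where $x_n=\beta(y)$ is the hemisphere $\tau_i[n,t]$ and $x_n=\gamma(y)$ is the roof formed by the remaining facets. Integrating $x_n^{-n}$ (this is the divergence-theorem step, with primitive $-\frac1{n-1}x_n^{1-n}$) gives the inner integral $\frac1{n-1}\beta(y)^{1-n}\big(1-(\beta(y)/\gamma(y))^{n-1}\big)$. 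Since the hemispherical facet has hyperbolic area $V(\tau_i[n,t])=\int_T R\,\beta(y)^{-n}\,dy$ and $\beta(y)\le R$, one has $\beta(y)^{1-n}\le R\,\beta(y)^{-n}$ pointwise; combining this with the uniform bound $\beta(y)/\gamma(y)\ge\lambda$ over $T$ yields $V(\tau[n,t])\le\frac1{n-1}(1-\lambda^{n-1})V(\tau_i[n,t])$. What remains is to compute the extremal height-ratio $\lambda=\min_{y}\beta(y)/\gamma(y)$ explicitly from the placement; this is where the algebraic expression under the root in \eqref{ineqonvolumegrowthu} is produced, and its limits ($\lambda\to0$ as $t\to\tfrac\pi2$, recovering the Haagerup--Munkholm bound $\frac1{n-1}$, and $\lambda\to1$ as $t\to0$) serve as a check.

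For the lower bound I would pass to the orthogonal-coordinate volume form, which in coordinates attached to a geodesic flag based at the circumcentre and adapted to the simplex (first axis toward a vertex, the next toward an edge midpoint, and so on) reads $dV=\prod_{k=2}^{n}\cosh^{k-1}(x_k)\,dx_1\cdots dx_n$. Inscribing in $\tau[n,t]$ a coordinate-aligned region $D$ whose extents are read off from this flag, the unit-weight coordinate $x_1$ contributes the half-edge length $\tanh^{-1}\!\big(\sin t\sqrt{\tfrac{n+1}{2n}}/\sqrt{1-\sin^2 t\,\tfrac{n-1}{2n}}\big)$ as a clean linear factor, while halving the region in each of the remaining $n-1$ directions produces the factor $2^{1-n}$ and a base whose $(n-1)$-volume is controlled by $V(\tau_i[n,t])$ (the facet being itself a regular $(n-1)$-simplex of circumradius $r_{n-1}$, handled by the same coordinates one dimension down). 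Bounding the remaining $\cosh$-weights from below and dividing by $V(\tau_i[n,t])$, then simplifying with the distance data of the first step, gives a lower bound of the stated shape.

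The main obstacle in both cases is the explicit bookkeeping rather than any structural difficulty. For the upper bound the crux is to determine where over $T$ the floor/roof height ratio is extremal and to express $\lambda$ through $\sin t$, from which the denominator $(n+\sin t)^2(1+\sin t)-(n^2-1)\sin^2 t(1-\sin t)^2$ emerges. For the lower bound the delicate point is to certify that the chosen coordinate region is genuinely contained in $\tau[n,t]$, so that the inequality is valid, while keeping its extents simple enough to integrate in closed form and to match the facet volume so that the circumradius factors collapse to the stated expression. These two computations, not the underlying ideas, are where the real work lies.
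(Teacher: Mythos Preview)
Your upper-bound plan is essentially the paper's: place $\tau[n,t]$ in the half-space model with the facet $\tau_i[n,t]$ on a hemisphere over $H$, integrate $x_n^{-n}$ in the vertical direction, and bound the resulting floor/roof ratio uniformly. The paper normalises to $R=1$ and recognises $\int_T(1-|v|^2)^{-n/2}\,dv$ as the Cayley--Klein integral~\eqref{ckvolumeofthefacets} for $V(\tau_i[n,t])$ rather than reading it off as the half-space area form, but this is the same computation, and the extremal-ratio calculation is exactly where the constant in~\eqref{ineqonvolumegrowthu} comes from.

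Your lower-bound sketch, however, has a real gap. The paper does \emph{not} inscribe a coordinate box, and I do not see how your box argument produces the stated constant. Two concrete problems: first, you base the orthogonal frame at the circumcentre, whereas the argument needs it at a \emph{vertex} $p^{-1}(E_1)$, so that the fundamental orthoscheme $\mathcal{O}=\conv\{p^{-1}(E_1),p^{-1}(K_1),\dots,p^{-1}(K_{n-1}),p^{-1}(O)\}$ sits in a single coordinate orthant and its facet $\mathcal{O}_{n-1}=\conv\{p^{-1}(E_1),\dots,p^{-1}(K_{n-1})\}$ is obtained by dropping the last integration; this is what links the two volumes, via $V(\tau[n,t])=(n+1)!\,V(\mathcal{O})$ and $V(\tau_i[n,t])=n!\,V(\mathcal{O}_{n-1})$ (whence the leading $(n+1)$). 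Second, and more importantly, you never explain why the base of your region is bounded below by a constant times $V(\tau_i[n,t])$; ``controlled by'' hides the whole difficulty.

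The mechanism that actually drives the bound is Chebyshev's integral inequality. Writing
\[
V(\mathcal{O})=\int_0^{\alpha_n}\!\int_0^{\alpha_1}\!\cdots\!\int_0^{\alpha_{n-2}}\Bigl(\int_0^{\alpha_{n-1}}\cosh^{n-1}x_{n-1}\,dx_{n-1}\Bigr)\cosh^{n-2}x_{n-2}\cdots\cosh x_1\,dx_{n-2}\cdots dx_n,
\]
one first uses $\int_0^{\alpha}\cosh^{n-1}\!\geq\alpha$ together with $\alpha_{n-1}(x_{n-2})\geq \frac{\tanh d_n}{\sinh d_{n-1}}\sinh x_{n-2}$, and then applies Chebyshev iteratively to the pair $(\sinh x_{k},\,\cosh^{k}x_{k})$ (both increasing) to peel off a factor $\tfrac12\cdot\frac{\tanh d_{k+1}}{\sinh d_{k}}$ at each step while leaving exactly the integrand of $V(\mathcal{O}_{n-1})$ behind. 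The $2^{1-n}$ is thus $n-1$ Chebyshev losses, not a geometric halving, and the remaining constant telescopes via $\prod_{k}\cosh d_k=\cosh r_n$ to $\dfrac{\sinh d_n\cosh d_1}{\cosh r_n\sinh d_1}\,d_1$, which is precisely the expression in~\eqref{ineqonvolumegrowthl}. None of this is visible from an inscribed-box picture, and without it your lower bound is not a proof but a heuristic for the shape of the answer.
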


\begin{note}
For $n=3$ and $t=\pi/2$, (\ref{ineqonvolumegrowthl}) is stronger  than the left hand side of (3.1) in \cite{haagerupmunkholm}. In fact,
$$
\lim\limits_{t\rightarrow \pi/2}(\cos t)\tanh^{-1}\left(\frac{\sin t\sqrt{1-\frac{n-1}{2n}}}{\sqrt{1-\sin^2 t\frac{n-1}{2n}}}\right)=\lim\limits_{t\rightarrow \pi/2}(\cos t)\tanh^{-1}(\sin t)=1,
$$
hence we have for $n=3$ that
$$
\frac{n-2}{(n-1)^2}=\frac{1}{4}\leq \frac{4}{4\sqrt{8}}=\frac{n+1}{2^{n-1}\sqrt{n^2-1}}\leq \frac{1}{n-1}=\frac{1}{2}.
$$
Consequently, in dimension three our result is a generalization of the inequalities of \cite{haagerupmunkholm}.
\end{note}

\begin{note}
In the case of $t=\pi/2$, (\ref{ineqonvolumegrowthu}) gives the same bound as we saw in (3.1) in \cite{haagerupmunkholm}. Really,
$$
\frac{1}{n-1}\left(1-\left(\sqrt{\frac{n^2(1-\sin t)^2(1+\sin t)}{(n+\sin t)^2(1+\sin t)-(n^2-1)\sin ^2 t(1-\sin t)^2}}\right)^{n-1}\right)
$$
in $t=\pi/2$ is equal to $\frac{1}{n-1}$ (in all dimensions).

\end{note}

\section{The proof of the theorem}

The proof can be divided into three steps. 

The \emph{first step} uses that calculations which can be got from the Cayley-Klein (or projective) model. We determine certain metric properties of the regular hyperbolic simplex of circumradius $r_n$. 

In the \emph{second step} using hyperbolic orthogonal coordinates we prove the general lower bound.
 
The \emph{third step} contains the proof of the upper bound. In this section we use Poincare's half-space model. 

For the good readability these steps can be found in three subsection and the result is the union of the statements of Lemma \ref{lemlowerbound} and Lemma \ref{lemmaupperb}.

\subsection{Calculations in the projective model}

\begin{figure}[section]\label{projective}
\includegraphics[scale=1]{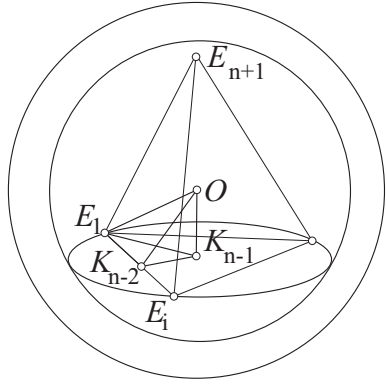}
\caption{Regular simplex in the projective model.}
\end{figure}

We consider the map $p:H^n\rightarrow CK^n$ sending the regular ideal simplex $\tau[n,\pi/2]$ into the Euclidean regular simplex $p(\tau[n,\pi/2])$ inscribed in the unit sphere. (Clearly, the map $p$ can be given concretely but it is not important for our purpose.) For every $t\in [0,\pi/2]$ $p(\tau[n,t])$ denotes the regular simplex concentric with $p(\tau[n,\pi/2])$ and with circumradius $r=\sin t$. For simplicity $S(n)$ denotes the regular simplex $p(\tau[n,\pi/2])$. Observe that the volume of $\tau[n,t]$ can be calculated from the volume form of $CK^n$ (see in \cite{gho 1})
$$
dV=\left(1-r^2\right)^{-(n+1)/2}\mathrm{d}x_1 \ldots \mathrm{d}x_n, \quad r^2:=\sum\limits_{i=1}^nx_i^2,
$$
and we get that
\begin{equation}\label{ckvolume}
V(\tau[n,t])=\int\limits_{(\sin t)S(n)}\left(1-r^2\right)^{-\frac{n+1}{2}}\mathrm{d}r=\sin^n t\int\limits_{S(n)}\left(1-(\sin^2t) r^2\right)^{-\frac{n+1}{2}}\mathrm{d}r.
\end{equation}
By spherical symmetry we can choose a coordinate system such a way, that $E_{n+1}=(0,\ldots,0,\sin t)^T$. Denote by $O$ and $K_{n-1}$ the center of the simplex $S(n)$ and the center its facet $\conv\{E_1,\ldots,E_n\}$, respectively. Then $|OK_{n-1}|=\sin t/n$ and the hyperbolic distance of the corresponding points is
$$
\rho\left(p^{-1}(O),p^{-1}(K_{n-1})\right)=\frac{1}{2}\ln \frac{n+\sin t}{n-\sin t}= \tanh^{-1}(\sin t).
$$
Immediate calculation with cross-ratio gives the circumradius of the facets $\tau_i[n,t]$ ($i=1,\ldots, n+1$) of  $\tau[n,t]$:
$$
\rho\left(p^{-1}(E_1),p^{-1}(K_{n-1})\right)=\ln \frac{\sqrt{n^2-\sin^2t}+\sin t\sqrt{n^2-1}}{n\cos t}.
$$
Denote by $K_i$ the center of the face $\conv\{E_1,E_2,\ldots,E_{i+1}\}$ for $i=1,\ldots,n-1$. Then the \emph{fundamental orthosceme} $\mathcal{O}$ of the simplex $\tau[n,t]$ is the convex hull of  the points $\{E_1,K_1,K_2,\ldots, K_{n-1},O\}$. Clearly $\tau[n,t]$ is disjoint union of $(n+1)!$ congruent copies of $\mathcal{O}$ hence $V(\tau[n,t])=(n+1)!V(\mathcal{O})$. The successive edge lengths of  $\mathcal{O}$ can be determined by induction. As we saw
$$
r_n:=\rho\left(p^{-1}(O),p^{-1}(E_{1})\right)=\tanh^{-1}(\sin t) \quad \mbox{ and } \quad d_{n}:=\rho\left(p^{-1}(O),p^{-1}(K_{n-1})\right)=\tanh^{-1}\left(\frac{\sin t}{n}\right)
$$
moreover
$$
r_{n-1}:=\rho\left(p^{-1}(K_{n-1}),p^{-1}(E_{1})\right)=\ln \frac{\sqrt{n^2-\sin^2t}+\sin t\sqrt{n^2-1}}{n\cos t}=
$$
$$
=\frac{1}{2}\ln\frac{\sqrt{1-\sin^2t\left(\frac{1}{n^2}\right)}+\sin t\sqrt{1-\frac{1}{n^2}}
}{\sqrt{1-\sin^2t\left(\frac{1}{n^2}\right)}-\sin t\sqrt{1-\frac{1}{n^2}}}=\tanh^{-1}\left(\frac{\sin t\sqrt{1-\frac{1}{n^2}}}{\sqrt{1-\sin^2t\left(\frac{1}{n^2}\right)}}\right)
$$
and
$$
d_{n-1}:=\rho\left(p^{-1}(K_{n-1}),p^{-1}(K_{n-2})\right)=\frac{1}{2}\ln\frac{\sqrt{1-\sin^2t\left(\frac{1}{n^2}\right)}+\frac{1}{(n-1)}\sin t\sqrt{1-\frac{1}{n^2}}
}{\sqrt{1-\sin^2t\left(\frac{1}{n^2}\right)}-\frac{1}{(n-1)}\sin t\sqrt{1-\frac{1}{n^2}}}=
$$
$$
=\tanh^{-1}\left(\frac{\sin t\sqrt{1-\frac{1}{n^2}}}{(n-1)\sqrt{1-\sin^2t\left(\frac{1}{n^2}\right)}}\right)
$$
it can be proved by induction that
\begin{equation}\label{circumradii}
r_{n-k}:=\rho\left(p^{-1}(K_{n-k}),p^{-1}(E_{1})\right)=\tanh^{-1}\left(\frac{\sin t\sqrt{1-\frac{k}{n(n-k+1)}}}{\sqrt{1-\sin^2t\left(\frac{k}{n(n-k+1)}\right)}}\right)
\end{equation}
holds for $k=1, \ldots, (n-1)$ and
\begin{equation}\label{edgelengths}
d_{n-k}:=\rho\left(p^{-1}(K_{n-k}),p^{-1}(K_{n-k-1})\right)=\tanh^{-1}\left(\frac{\sin t\sqrt{1-\frac{k}{n(n-k+1)}}}{(n-k)\sqrt{1-\sin^2t\left(\frac{k}{n(n-k+1)}\right)}}\right)
\end{equation}
holds also for $k=1, \ldots, (n-1)$ if $K_0$ means $E_1$.

With respect to an $(n-1)$-dimensional projective model we can determine the volume of a facet of the regular $n$-simplex with circumradius $r_n$. In fact, the circumradius of its facets is $r_{n-1}$ hence using the formula (\ref{ckvolume}) we get
\begin{equation}\label{ckvolumeofthefacets}
V(\tau_i[n,t])=\int\limits_{(\tanh {r_{n-1})S(n-1)}}\left(1-r^2\right)^{-\frac{n}{2}}\mathrm{d}r=\int\limits_{\frac{\sin t\sqrt{1-\frac{1}{n^2}}}{\sqrt{1-\sin^2t\left(\frac{1}{n^2}\right)}}S(n-1)}\left(\frac{1}{\sqrt{1-r^2}}\right)^{n}\mathrm{d}r.
\end{equation}

\subsection{Hyperbolic orthogonal coordinates and the lower bound}

Take a hyperbolic orthogonal coordinate system with origin $p^{-1}(E_1)$ on the following way. Let the $x_n$ axis be the line through the points $p^{-1}(E_1)$ and $p^{-1}(E_{2})$ containing the point $p^{-1}(K_1)$, too. Let the $x_{1}$ axis the line of the hyperbolic 2-plane containing the points $p^{-1}(E_1)$, $p^{-1}(K_1)$ and $p^{-1}(K_2)$; and directed such that the coordinates of the points of the rectangular triangle $p^{-1}(E_1), p^{-1}(K_1),p^{-1}(K_2)$ will be non-negative real numbers. Continue this process and choose the axis $x_k$  such that the $k-1$ subspace is spanned by the axis $\lin\{x_n,x_1,x_2,\ldots,x_{k}\}$ contains the $(k-1)$-dimensional orthosceme is spanned by the points $\{p^{-1}(E_1),p^{-1}(K_1), \ldots, p^{-1}(K_{n-1})\}$ in its positive orthant (hyperoctant). Then we can apply the following volume form (see paragraph 3.3.3 in \cite{gho 1}) 
$$
V(\mathcal{O})=\int\limits_{0}^{\alpha_n}\int\limits_{0}^{\alpha_1}\cdots  \int\limits_{0}^{\alpha_{n-1}}
(\cosh^{n-1}x_{n-1})(\cosh^{n-2}  x_{n-2})\cdots (\cosh  x_{1})
\mathrm{d}x_{n-1}\cdots \mathrm{d}x_{1}\mathrm{d}x_n,
$$
where
\begin{eqnarray*}
\alpha_{n-1}& = &\tanh^{-1}\left(\frac{ \tanh d_{n}}{\sinh d_{n-1}}\sinh x_{n-2}\right) \\
\alpha_{n-2}& = & \tanh^{-1}\left(\frac{ \tanh d_{n-1}}{\sinh d_{n-2}}\sinh x_{n-3}\right)\\
& \vdots & \\
\alpha_2 &= &\tanh^{-1}\left(\frac{ \tanh d_{3}}{\sinh d_2}\sinh x_1\right)\\
\alpha_1 & = & \tanh^{-1}\left(\frac{ \tanh d_{2}}{\sinh d_1}\sinh x_n\right)\\
\alpha_n & = & d_1.
\end{eqnarray*}
Observe that the volume of the facet $\mathcal{O}_{n-1}:=\conv\{p^{-1}(E_1),p^{-1}(K_1),p^{-1}(K_2)\ldots p^{-1}(K_{n-1})\}$ is
$$
V(\mathcal{O}_{n-1})=\int\limits_{0}^{\alpha_n}\int\limits_{0}^{\alpha_1}\cdots  \int\limits_{0}^{\alpha_{n-2}}(\cosh^{n-2}  x_{n-2})\cdots (\cosh  x_{1})
\mathrm{d}x_{n-2}\cdots \mathrm{d}x_{1}\mathrm{d}x_n,
$$
with the same $\alpha_i$'s. Consequently these formulas leads to a connection between $V(\mathcal{O})$ and $V(\mathcal{O}_{n-1})$ and between $V(\tau[n,t])=(n+1)!V(\mathcal{O})$ and $V(\tau_{i}[n,t])=n!V(\mathcal{O}_{n-1})$. 
In the proof we use Chebyshev's integral inequality (see in \cite{mitrinovic}), saying that if $f,g:[a,b]\rightarrow \mathbb{R}$ have the same monotony then
$$
\int\limits_a^bf(x)g(x)\mathrm{d}x\geq \frac{1}{b-a}\int\limits_a^bf(x)\mathrm{d}x\int\limits_a^bg(x)\mathrm{d}x,
$$
and if $f; g$ have opposite monotony, then the inequality should be reversed.
\begin{lemma}\label{lemlowerbound}
For $n\geq 3$ we have
\begin{equation}\label{lowerbound}
V(\tau[n,t])\geq (n+1)\left(\frac{1}{2}\right)^{n-1}\frac{\sqrt{1-\sin^2 t\frac{n-1}{2n}}}{\sqrt{1-\frac{n-1}{2n}}\sqrt{n^2-\sin^2 t}}(\cos t)\tanh^{-1}\left(\frac{\sin t\sqrt{1-\frac{n-1}{2n}}}{1-\sin^2 t\frac{n-1}{2n}}\right)V(\tau_{i}[n,t])
\end{equation}
\end{lemma}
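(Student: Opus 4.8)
The plan is to reduce the claimed inequality to a comparison of the two orthoscheme integrals and then to strip the integrations one layer at a time using Chebyshev's inequality. Since $V(\tau[n,t])=(n+1)!\,V(\mathcal{O})$ and $V(\tau_i[n,t])=n!\,V(\mathcal{O}_{n-1})$, it suffices to bound $V(\mathcal{O})/V(\mathcal{O}_{n-1})$ from below by the constant on the right of (\ref{lowerbound}) divided by $(n+1)$. Comparing the two displayed volume forms, $V(\mathcal{O})$ is obtained from $V(\mathcal{O}_{n-1})$ by inserting a single extra innermost integral $\Phi(x_{n-2}):=\int_0^{\alpha_{n-1}}\cosh^{n-1}x_{n-1}\,\mathrm{d}x_{n-1}$, whose upper limit $\alpha_{n-1}=\tanh^{-1}\!\big(\tfrac{\tanh d_n}{\sinh d_{n-1}}\sinh x_{n-2}\big)$ depends only on $x_{n-2}$. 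Thus $V(\mathcal{O})/V(\mathcal{O}_{n-1})$ is exactly the average of $\Phi$ against the positive measure that defines $V(\mathcal{O}_{n-1})$.

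First I would record the monotonicity facts needed for Chebyshev. For $x\geq 0$ each weight $\cosh^{j}x_{j}$ is increasing, and since $\alpha_{n-1}$ increases with $x_{n-2}$ the inserted factor $\Phi(x_{n-2})$ is increasing as well; more generally every nested upper limit $\alpha_{i+1}$ is an increasing function of the next variable, so that each mean $\tfrac{1}{\alpha}\int_0^{\alpha}(\cdots)$ of a nonnegative increasing integrand is again increasing in its outer variable. Hence at the $x_{n-2}$-layer the factors $\cosh^{n-2}x_{n-2}$ and $\Phi(x_{n-2})$ share monotonicity, and Chebyshev's integral inequality separates them, giving $\int_0^{\alpha_{n-2}}\cosh^{n-2}x_{n-2}\,\Phi\,\mathrm{d}x_{n-2}\ \geq\ \tfrac{1}{\alpha_{n-2}}\big(\int_0^{\alpha_{n-2}}\cosh^{n-2}x_{n-2}\,\mathrm{d}x_{n-2}\big)\big(\int_0^{\alpha_{n-2}}\Phi\,\mathrm{d}x_{n-2}\big)$. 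The first factor is precisely the $x_{n-2}$-integrand of $V(\mathcal{O}_{n-1})$, so it cancels in the ratio, while the remaining mean of $\Phi$ is a new increasing function of $x_{n-3}$. Iterating from the inside out through the layers $x_{n-2},x_{n-3},\dots,x_1$ peels off the matching layers of $V(\mathcal{O}_{n-1})$ one by one and leaves, at each of these $n-2$ steps, an explicit averaging factor.

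Next I would make the accumulated constant explicit. Bounding $\Phi(x_{n-2})\geq\alpha_{n-1}(x_{n-2})$ by $\cosh\geq 1$ and inserting the closed forms of $d_n,d_{n-1},\dots$ from (\ref{circumradii})--(\ref{edgelengths}), the innermost integral together with the $n-2$ successive unweighted averages, each of integrands vanishing at the corner of $\mathcal{O}$, contribute after estimation the power $(1/2)^{n-1}$ and the metric prefactor $\frac{\sqrt{1-\sin^2 t\frac{n-1}{2n}}}{\sqrt{1-\frac{n-1}{2n}}\sqrt{n^2-\sin^2 t}}$ (the $\cos t$ emerging from the ratios $\tfrac{\tanh d_n}{\sinh d_{n-1}}$ via $\tanh d_n=\sin t/n$). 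The outermost integration runs over the \emph{fixed} interval $[0,\alpha_n]=[0,d_1]$, and since (\ref{edgelengths}) with $k=n-1$ gives $d_1=\tanh^{-1}\!\big(\tfrac{\sin t\sqrt{1-\frac{n-1}{2n}}}{\sqrt{1-\sin^2 t\frac{n-1}{2n}}}\big)$, this layer supplies exactly the surviving factor $\tanh^{-1}(\cdots)$. Collecting everything reproduces the right-hand side of (\ref{lowerbound}).

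The main obstacle is the nested, variable upper limits: because $\Phi$ and every weight vanish at the corner of the orthoscheme, no uniform pointwise lower bound for $\Phi$ is available, and one must track the monotone weighted averages honestly through all $n-1$ layers instead of bounding them locally. Verifying the common monotonicity required for each Chebyshev application, and then assembling the telescoped averaging factors and the explicit edge lengths $d_k$ into the single closed form of (\ref{lowerbound}), is the technical heart; the per-layer factor $1/2$ and the fact that $d_1$ is the only full interval length that survives are the two points most needing care.
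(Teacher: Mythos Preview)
Your strategy --- rewriting $V(\mathcal{O})$ as the integral of $\Phi$ against the measure defining $V(\mathcal{O}_{n-1})$ and then peeling off the nested layers by Chebyshev's inequality --- is precisely the architecture of the paper's proof. The paper applies the elementary bound $\Phi(x_{n-2})\geq \alpha_{n-1}(x_{n-2})\geq \dfrac{\tanh d_n}{\sinh d_{n-1}}\sinh x_{n-2}$ \emph{before} the first Chebyshev step, so that each subsequent Chebyshev separates an explicit $\sinh$ from the $\cosh$--weights; you postpone this bound and Chebyshev $\Phi$ itself against the weights, estimating the resulting iterated average only at the end. Both orderings are legitimate and lead to the same constant.

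The genuine gap is in the paragraph where you ``make the accumulated constant explicit''. You assert that the $n-2$ averages over $x_{n-2},\dots,x_1$ produce the power $(1/2)^{n-1}$ and that the outermost $x_n$-layer merely ``supplies the surviving factor $\tanh^{-1}(\cdots)$''. This accounting is inconsistent: $n-2$ layers cannot by themselves yield an $(n-1)$st power of $1/2$, and the outermost layer over $[0,d_1]$ is not a bare evaluation --- it too requires a Chebyshev step (to separate your iterated mean from the $x_n$-integrand of $V(\mathcal{O}_{n-1})$, which is itself increasing) and then the estimate $\dfrac{1}{d_1}\int_0^{d_1}\alpha_1(x_n)\,\mathrm{d}x_n\geq \dfrac{1}{2}\,\dfrac{\tanh d_2}{\sinh d_1}\,d_1$. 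That is where the last $1/2$ and the last ratio in the telescoping product $\dfrac{\tanh d_n\cdots\tanh d_2}{\sinh d_{n-1}\cdots\sinh d_1}$ come from. In the paper the per-layer $1/2$ arises concretely from $\int_0^{\alpha}\sinh=\cosh\alpha-1\geq \alpha^2/2$; in your formulation it comes from averaging the linear minorant $\alpha_k\geq c_k x$ over $[0,\alpha_{k-1}]$. Either way, the telescoped product must then be simplified via $\prod_{k}\cosh d_k=\cosh r_n=1/\cos t$ (this, not $\tanh d_n=\sin t/n$, is where the factor $\cos t$ actually enters) together with the explicit values of $\sinh d_n$, $\sinh d_1$, $\cosh d_1$ to obtain the stated prefactor. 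That computation is the actual content of the lemma, and in your proposal it is asserted rather than carried out.
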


\begin{proof}
We compare the following two integrals
$$
V(\mathcal{O})=\int\limits_{0}^{\alpha_n}\int\limits_{0}^{\alpha_1}\cdots  \int\limits_{0}^{\alpha_{n-2}}\left(\int\limits_{0}^{\alpha_{n-1}}
\cosh^{n-1}x_{n-1}\mathrm{d}x_{n-1}\right)\cosh^{n-2}  x_{n-2}\cdots \cosh  x_{1}\mathrm{d}x_{n-2}
\cdots \mathrm{d}x_{1}\mathrm{d}x_n,
$$
and
$$
V(\mathcal{O}_{n-1})=\int\limits_{0}^{\alpha_n}\int\limits_{0}^{\alpha_1}\cdots  \int\limits_{0}^{\alpha_{n-2}}\cosh^{n-2}  x_{n-2}\cdots \cosh  x_{1}
\mathrm{d}x_{n-2}\cdots \mathrm{d}x_{1}\mathrm{d}x_n.
$$
Since
\begin{eqnarray*}
\cos^{2k}(x) & = & \frac{1}{4^k}\binom{2k}{k}+\frac{1}{2^{2k-1}} \sum\limits_{l=0}^{k-1}\binom{2k}{l}\cos(2(k-l)x) \\
\cos^{2k+1}x & = & \frac{1}{4^{k}}\sum\limits_{l=0}^{k}\binom{2k+1}{l}\cos((2(k-l)+1)x)
\end{eqnarray*}
then using the identity $\cosh x=\cos (ix)$ we have
\begin{eqnarray*}
\cosh^{2k} x &= &\frac{1}{4^k}\binom{2k}{k}+\frac{1}{2^{2k-1}} \sum\limits_{l=0}^{k-1}\binom{2k}{l}\cosh(2(k-l)x)\\
\cosh^{2k+1}x & = & \frac{1}{4^{k}}\sum\limits_{l=0}^{k}\binom{2k+1}{l}\cosh((2(k-l)+1)x).
\end{eqnarray*}
Let denote $F_{n-1}(x)$ the antiderivative of the functions $\cosh^{n-1}x$.
Then we have
\begin{equation}\label{antiderivate}
F_{n-1}(x)=\left\{\begin{array}{lcl}
\frac{1}{4^k}\binom{2k}{k}x+\frac{1}{2^{2k-1}} \sum\limits_{l=0}^{k-1}\binom{2k}{l}\frac{1}{2(k-l)}\sinh(2(k-l)x) & \mbox{ if } & n=2k \\
\frac{1}{4^{k}}\sum\limits_{l=0}^{k}\binom{2k+1}{l}\frac{1}{(2(k-l)+1)}\sinh((2(k-l)+1)x) & \mbox{ if } & n=2k+1.
\end{array}
\right.
\end{equation}
Since $F_{n-1}(0)=0$ we get 
$$
f_{n-1}(x_{n-2}):=\int\limits_{0}^{\alpha_{n-1}}\cosh^{n-1}(x_{n-1})\mathrm{d}x_{n-1}=
$$
$$
=\left\{\begin{array}{lcl}
\frac{1}{4^k}\binom{2k}{k}\alpha_{n-1}(x_{n-2})+\frac{1}{2^{2k-1}} \sum\limits_{l=0}^{k-1}\binom{2k}{l}\frac{1}{2(k-l)}\sinh(2(k-l)\alpha_{n-1}(x_{n-2})) & \mbox{ if } & n=2k \\
\frac{1}{4^{k}}\sum\limits_{l=0}^{k}\binom{2k+1}{l}\frac{1}{(2(k-l)+1)}\sinh((2(k-l)+1)\alpha_{n-1}(x_{n-2})) & \mbox{ if } & n=2k+1
\end{array}
\right.
$$
where
$$
\alpha_{n-1}(x_{n-2})=\tanh^{-1}\left(\frac{ \tanh d_{n}}{\sinh d_{n-1}}\sinh x_{n-2}\right)=\tanh^{-1}\left(\sqrt{\frac{n-1}{n+1}}\sqrt{1-\frac{2\sin^2t}{n(n-1)}}\sinh x_{n-2}\right).
$$
Without loss of generality we can assume that $n=2k+1$, the other case when $n=2k$ can be proved analogously leading to the same result.
Since $f_{n-1}(x_{n-2})$ is strictly increasing and $\sinh x,\tanh^{-1}(x)\geq x$ for all positive $x$, we have
$$
f_{n-1}(x_{n-2})\geq \frac{1}{4^{k}}\sum\limits_{l=0}^{k}\binom{2k+1}{l}\frac{1}{(2(k-l)+1)}(2(k-l)+1)\alpha_{n-1}(x_{n-2}))=\alpha_{n-1}(x_{n-2}))=
$$
$$
=\tanh^{-1}\left(\frac{ \tanh d_{n}}{\sinh d_{n-1}}\sinh x_{n-2}\right)\geq \frac{ \tanh d_{n}}{\sinh d_{n-1}}\sinh x_{n-2},
$$
implying that
$$
V(\mathcal{O})= \int\limits_{0}^{\alpha_n}\int\limits_{0}^{\alpha_1}\cdots  \int\limits_{0}^{\alpha_{n-2}}\left(\int\limits_{0}^{\alpha_{n-1}}
\cosh^{n-1}x_{n-1}\mathrm{d}x_{n-1}\right)\cosh^{n-2}  x_{n-2}\cdots \cosh  x_{1}\mathrm{d}x_{n-2}
\cdots \mathrm{d}x_{1}\mathrm{d}x_n\geq
$$
$$
\geq \frac{ \tanh d_{n}}{\sinh d_{n-1}}\int\limits_{0}^{\alpha_n}\int\limits_{0}^{\alpha_1}\cdots  \int\limits_{0}^{\alpha_{n-2}}\sinh x_{n-2}\cosh^{n-2}  x_{n-2}\mathrm{d}x_{n-2}\cosh^{n-3}  x_{n-3}\cdots \cosh  x_{1}\mathrm{d}x_{n-3}
\cdots \mathrm{d}x_{1}\mathrm{d}x_n.
$$
Using the Chebysev's inequality we get that
$$
\int\limits_{0}^{\alpha_{n-2}}\sinh x_{n-2}\cosh^{n-2}  x_{n-2}\mathrm{d}x_{n-2}\geq \frac{\cosh \alpha_{n-2}-1}{\alpha_{n-2}}\int\limits_{0}^{\alpha_{n-2}}\cosh^{n-2}  x_{n-2}\mathrm{d}x_{n-2}\geq
$$
$$
\geq\frac{ \alpha_{n-2}}{2}\int\limits_{0}^{\alpha_{n-2}}\cosh^{n-2}  x_{n-2}\mathrm{d}x_{n-2}\geq \frac{1}{2}\frac{ \tanh d_{n-1}}{\sinh d_{n-2}}\sinh x_{n-3}\int\limits_{0}^{\alpha_{n-2}}\cosh^{n-2}  x_{n-2}\mathrm{d}x_{n-2}.
$$
So
$$
V(\mathcal{O})\geq \frac{1}{2}\frac{ \tanh d_{n}}{\sinh d_{n-1}}\frac{ \tanh d_{n-1}}{\sinh d_{n-2}}\int\limits_{0}^{\alpha_n}\int\limits_{0}^{\alpha_1}\cdots  \int\limits_{0}^{\alpha_{n-3}}\sinh x_{n-3}\left(\int\limits_{0}^{\alpha_{n-2}}\cosh^{n-2}  x_{n-2}\mathrm{d}x_{n-2}\right)\times
$$
$$
\times\cosh^{n-3}  x_{n-3}\mathrm{d}x_{n-3}\cosh^{n-4}  x_{n-4}\cdots \cosh  x_{1}\mathrm{d}x_{n-4}
\cdots \mathrm{d}x_{1}\mathrm{d}x_n
$$
From ($\ref{antiderivate}$) can be seen easily that the function
$$
\left(\int\limits_{0}^{\alpha_{n-2}}\cosh^{n-2}  x_{n-2}\mathrm{d}x_{n-2}\right)\cosh^{n-3}  x_{n-3}
$$
holds the assumption of Chebysev's  inequality and hence we can continue the process.
$$
V(\mathcal{O})\geq \left(\frac{1}{2}\right)^2\frac{ \tanh d_{n}\tanh d_{n-1}\tanh d_{n-2}}{\sinh d_{n-1}\sinh d_{n-2}\sinh d_{n-3}}\int\limits_{0}^{\alpha_n}\int\limits_{0}^{\alpha_1}\cdots  \int\limits_{0}^{\alpha_{n-4}}\sinh x_{n-4}\times
$$
$$
\times\left(\int\limits_{0}^{\alpha_{n-3}}\int\limits_{0}^{\alpha_{n-2}}\cosh^{n-2}  x_{n-2}\cosh^{n-3}  x_{n-3}\mathrm{d}x_{n-2}\mathrm{d}x_{n-3}\right)\cosh^{n-4}  x_{n-4}\mathrm{d}x_{n-4}\times
$$
$$
\times\cosh^{n-5}  x_{n-5}\cdots \cosh  x_{1}\mathrm{d}x_{n-5}
\cdots \mathrm{d}x_{1}\mathrm{d}x_n
$$
and so on... By induction we get the inequality
$$
\frac{1}{(n+1)!}V(\tau[n,t])=V(\mathcal{O})\geq \left(\frac{1}{2}\right)^{n-1}\frac{\left(\tanh d_{n}\cdots \tanh d_{2}\right)d_1}{\sinh d_{n-1}\cdots \sinh d_1}V(\mathcal{O}_{n-1})=
$$
$$
=\left(\frac{1}{2}\right)^{n-1}\frac{\left(\sinh d_{n}\right)d_1}{\cosh d_{n}\cdots \cosh d_2\sinh d_1}V(\mathcal{O}_{n-1})=\left(\frac{1}{2}\right)^{n-1}\frac{\left(\sinh d_{n}\cosh d_1\right)d_1}{\cosh r_{n}\sinh d_1}\frac{V(\tau_{i}[n,t])}{n!}.
$$
But
$$
\sinh d_n=\frac{\sin t}{\sqrt{n^2-\sin^2 t}}, \quad \cosh r_n=\frac{1}{\cos t},
$$
$$
d_1=\tanh^{-1}\left(\frac{\sin t\sqrt{1-\frac{n-1}{2n}}}{1-\sin^2 t\frac{n-1}{2n}}\right), \quad
\sinh d_1=\frac{\sin t\sqrt{1-\frac{n-1}{2n}}}{\cos t} \quad  \cosh d_1=\frac{\sqrt{1-\sin^2 t\frac{n-1}{2n}}}{\cos t},
$$
we get the inequality
$$
V(\tau[n,t])\geq (n+1)\left(\frac{1}{2}\right)^{n-1}\frac{\sqrt{1-\sin^2 t\frac{n-1}{2n}}}{\sqrt{1-\frac{n-1}{2n}}\sqrt{n^2-\sin^2 t}}(\cos t)\tanh^{-1}\left(\frac{\sin t\sqrt{1-\frac{n-1}{2n}}}{1-\sin^2 t\frac{n-1}{2n}}\right)V(\tau_{i}[n,t]),
$$
as we stated.
\end{proof}

\subsection{Half-space model and the upper bound}

\begin{figure}[section]\label{halfspace}
\includegraphics[scale=1]{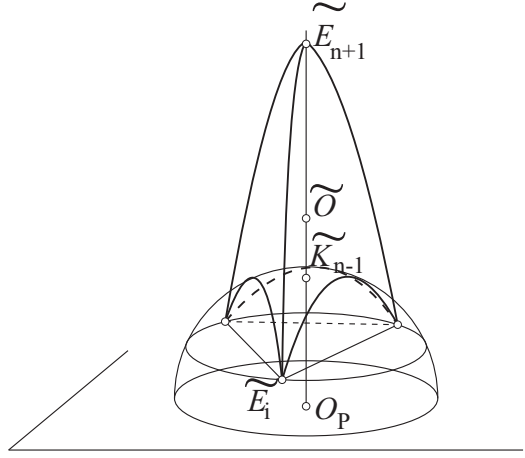}
\caption{Regular simplex in the half-space model.}
\end{figure}

In the half-space model we consider an Euclidean orthogonal coordinate system with origin $O_P$ lying on the boundary (horizontal) hyperplane $H$. For the sake of simplicity denote by $\widetilde{P}$ the point $h(p^{-1}(P))\in P^n$ where $P\in CK^n$ is a point of the projective model. Assume that $O_P, \widetilde{K_{n-1}}, \widetilde{O}$ and $\widetilde{E_{n+1}}$ lying on a vertical line, which is the $x_n$ axis of our coordinate system (see in Fig. 2). From the regularity of $\tau[n,t]$ follows that the Euclidean simplex $\conv\{\widetilde{E_1},\ldots ,\widetilde{E_n}\}$ of dimension $(n-1)$ is also regular in Euclidean sense and lying in a horizontal (parallel to $H$) hyperplane.

Set $|O_P\widetilde{K_{n-1}}|=1$ then $|O_P\widetilde{O}|=\sqrt{\frac{n+\sin t}{n-\sin t}}$ and $|O_P\widetilde{E_{n+1}}|=\sqrt{\frac{(n+\sin t)(1+\sin t)}{(n-\sin t)(1-\sin t)}}$.
If $\alpha$ is the Euclidean angle $\widetilde{E_i}O_P\widetilde{O}\angle$ $(1\leq i\leq n-1)$ then we have:
$$
\rho(E_1\widetilde{K})=\ln \frac{1+\sin \alpha}{\cos \alpha}
$$
and the equation between $\alpha$ and $t$ is
$$
\frac{\sqrt{n^2-\sin^2t}+\sin t\sqrt{n^2-1}}{n\cos t}=\frac{1+\sin \alpha}{\cos \alpha}
$$
implying that
$$
\sin t=\frac{n\sin \alpha}{\sqrt{n^2-\cos^2 \alpha}} \quad \mbox{ or }\quad \sin\alpha= \frac{\sqrt{n^2-1}\sin t}{\sqrt{n^2-\sin^2 t}}=\frac{\sin t\sqrt{1-\frac{1}{n^2}}}{\sqrt{1-\frac{\sin^2 t}{n^2}}}.
$$
The Euclidean coordinates of the points $\widetilde{E_i}=(x_j^i)$, $j=1,\ldots, n$, $i=1,\ldots, n+1$ hold the equalities:
$$
\sum\limits_{j=1}^{n-1}(x_j^i)^2=\sin^2\alpha =\frac{(n^2-1)\sin^2 t}{n^2-\sin ^2t} \quad \mbox{for all} \quad i=1\ldots n,
$$
$$
(x_n^i)^2=\cos^2\alpha =\frac{n^2\cos^2t}{n^2-\sin^2 t} \quad \mbox{for all} \quad i=1\ldots n
$$
$$
\sum\limits_{j=1}^{n-1}(x_j^{n+1})^2=0, \quad (x_n^{n+1})^2=\frac{(n+\sin t)(1+\sin t)}{(n-\sin t)(1-\sin t)}.
$$
Let denote by $C_i=\left(y_1^i,\ldots ,y_{n-1}^i,0\right)^T$ and $\gamma_i$ the center and the radius of the sphere through the points $\{\widetilde{E_1},\ldots,\widetilde{E_{i-1}},\widetilde{E_{i+1}},\ldots, \widetilde{E_{n+1}}\}$. This sphere contains a facet of $h(\tau[n,t])$. Hence for all $i=1,\ldots ,n $ we have  $(n-1)$ equalities holding for the coordinates above. These are
$$
\sum\limits_{j=1}^{n-1}x_j^ky_j^i=\frac{-(n+1)\sin t}{(n-\sin t)(1-\sin t)}=:c \quad \mbox{ where } k\neq i, \quad k=1,\ldots, n.
$$
Introduce the notation $X^i=:[x_j^k]_{j=1,\ldots,n-1}^{k=1,\ldots,n; k\ne i} \in \mathbb{R}^{(n-1)\times (n-1)}$ and $y^i=[y_1^i,\ldots ,y_{n-1}^i]^T$, respectively. Then we get
$$
(y^i)^TX^i=[c,\ldots,c] \quad \mbox{ or equivalently } (X^i)^Ty^i=[c,\ldots,c]^T=c[1,\ldots,1]^T.
$$
Since $\conv\{\widetilde{E_1},\ldots ,\widetilde{E_n}\}$ is a regular simplex we get
$$
R(n-1):=(X^i)^T(X^i)=\frac{(n^2-1)\sin^2 t}{n^2-\sin^2 t}\left(\begin{array}{ccccc}
                                                   1 & -\frac{1}{n-1} & \ldots & -\frac{1}{n-1} & -\frac{1}{n-1} \\
                                                   -\frac{1}{n-1} & 1 & -\frac{1}{n-1} & \ldots & -\frac{1}{n-1} \\
                                                   \vdots & \vdots & \vdots & \vdots & \vdots \\
                                                   -\frac{1}{n-1} & \ldots & -\frac{1}{n-1} & 1 & -\frac{1}{n-1} \\
                                                   -\frac{1}{n-1} & -\frac{1}{n-1} & \ldots  & -\frac{1}{n-1} & 1 \\
                                                 \end{array}
                                               \right).
$$
It follows that
$$
(X^i)^T=R(n-1)(X^i)^{-1} \quad \mbox{ and so } ((X^i)^T)^{-1}=(X^i)(R(n-1))^{-1}.
$$
Since the inverse of $R(n-1)$ is equal to
$$
(R(n-1))^{-1}=\frac{n^2-\sin^2 t}{n(n+1)\sin^2 t}\left(
                                                  \begin{array}{ccccc}
                                                    2 & 1 & 1 & \ldots & 1 \\
                                                    1 & 2 & 1 & \ldots & 1 \\
                                                    \vdots & \vdots & \vdots & \vdots & \vdots \\
                                                    1 & \ldots & 1 & 2 & 1 \\
                                                    1 & 1 & \ldots & 1 & 2 \\
                                                  \end{array}
                                                \right)
$$
we get
$$
y^i=X^i(R(n-1))^{-1}[c,\ldots,c]^T=-\frac{(n+\sin t)}{n\sin t(1-\sin t)}X^i\left(
                                                  \begin{array}{ccccc}
                                                    2 & 1 & 1 & \ldots & 1 \\
                                                    1 & 2 & 1 & \ldots & 1 \\
                                                    \vdots & \vdots & \vdots & \vdots & \vdots \\
                                                    1 & \ldots & 1 & 2 & 1 \\
                                                    1 & 1 & \ldots & 1 & 2 \\
                                                  \end{array}\right)\left(
                                                                      \begin{array}{c}
                                                                        1\\
                                                                        1\\
                                                                        \vdots \\
                                                                        1\\
                                                                        1 \\
                                                                      \end{array}
                                                                    \right)=
$$
$$
=-\frac{(n+\sin t)}{\sin t(1-\sin t)}X^i\left(\begin{array}{c}
                                                                        1 \\
                                                                        1\\
                                                                        \vdots \\
                                                                        1 \\
                                                                        1 \\
                                                                      \end{array}
                                                                    \right)=-\frac{(n+\sin t)}{\sin t(1-\sin t)}\left(
                                                                      \begin{array}{c}
                                                                        \sum\limits_{\substack{k=1 \\  k\ne i}}^{n}x_1^k\\
                                                                        \sum\limits_{\substack{k=1 \\  k\ne i}}^{n}x_2^k\\
                                                                        \vdots \\
                                                                        \sum\limits_{\substack{k=1 \\  k\ne i}}^{n}x_{n-2}^k \\
                                                                        \sum\limits_{\substack{k=1 \\  k\ne i}}^{n}x_{n-1}^k\\
                                                                      \end{array}\right)=-\frac{(n+\sin t)}{\sin t(1-\sin t)}\sum\limits_{\substack{k=1 \\  k\ne i}}^{n}v_k,
$$
where $v_k:=[x_1^k,\ldots,x_{n-1}^k]^T$. Since $\sum\limits_{k=1}^{n}v_k=0$, we get that
$$
y^i=\frac{(n+\sin t)}{\sin t(1-\sin t)}v_i.
$$
Hence
$$
|y^i|^2=\frac{(n+\sin t)^2}{\sin^2 t(1-\sin t)^2}|v_i|^2=\frac{(n^2-1)(n+\sin t)}{(n-\sin t)(1-\sin t)^2},
$$
implying that
$$
\gamma_i^2=\left(\frac{n+\sin t}{1-\sin t}\right)^2.
$$
Let denote by $\epsilon[n-1,t]=\conv\{F_1,\ldots ,F_n\}$ where $F_i$ is the orthogonal projection of $\widetilde{E_i}$ to $H$. $\epsilon[n-1,t]$ can be dissected into $n$ congruent simplices with a common vertex $O_P$. Denote by $\epsilon_i[n-1,t]$ these simplices, concretely, we set $\epsilon_i[n-1,t]:=\{\sum_{j\ne i}\alpha_j v_j \quad | \quad \alpha_j\geq 0, \sum_{j\ne i} \alpha_j\leq 1\}$. If a point $z\in P^n$ is in  $h(\tau[n,t])$,  then it is on the form $z=v+z_n(v)e_n$, where $v\in \epsilon([n-1,t])$.  If $z\in h(\tau[n,t])$ with $v\in  \epsilon_n[n-1,t]$ then $\|z-C_n\|^2\leq \gamma_n^2$ hence we have that
$$
\sum\limits_{i=1}^{n-1}\left(\sum\limits_{j=1}^{n-1}\alpha_j x_i^j-y_i^n\right)^2+z_n^2\leq (\gamma_n)^2.
$$
From this inequality we get
$$
z_n^2\leq \left(\frac{n+\sin t}{1-\sin t}\right)^2- \sum\limits_{i=1}^{n-1}\left(\sum\limits_{j=1}^{n-1}\alpha_j x_i^j-\frac{(n+\sin t)}{\sin t(1-\sin t)}x_i^n\right)^2=
$$
$$
=\left(\frac{n+\sin t}{1-\sin t}\right)^2- \sum\limits_{i=1}^{n-1}\left(\sum\limits_{j=1}^{n-1}\left(\alpha_j+\frac{(n+\sin t)}{\sin t(1-\sin t)}\right)x_i^j\right)^2=
$$
$$
=\left(\frac{n+\sin t}{1-\sin t}\right)^2- \sum\limits_{i=1}^{n-1}\sum\limits_{j,k=1}^{n-1}\left(\alpha_j+\frac{(n+\sin t)}{\sin t(1-\sin t)}\right)\left(\alpha_k+\frac{(n+\sin t)}{\sin t(1-\sin t)}\right)x_i^jx_i^k=
$$
$$
=\left(\frac{n+\sin t}{1-\sin t}\right)^2- \sum\limits_{j,k=1}^{n-1}\left(\alpha_j+\frac{(n+\sin t)}{\sin t(1-\sin t)}\right)\left(\alpha_k+\frac{(n+\sin t)}{\sin t(1-\sin t)}\right)\sum\limits_{i=1}^{n-1}x_i^jx_i^k=
$$
$$
=\left(\frac{n+\sin t}{1-\sin t}\right)^2- \frac{(n^2-1)\sin^2 t}{n^2-\sin ^2t}\left(\sum\limits_{j=1}^{n-1}\left(\alpha_j+\frac{(n+\sin t)}{\sin t(1-\sin t)}\right)^2-\right.
$$
$$
\left. -\frac{1}{n-1}\sum\limits_{j\ne k=1}^{n-1}\left(\alpha_j+\frac{(n+\sin t)}{\sin t(1-\sin t)}\right)\left(\alpha_k+\frac{(n+\sin t)}{\sin t(1-\sin t)}\right)\right)=
$$
$$
=\left(\frac{n+\sin t}{1-\sin t}\right)^2- \frac{(n^2-1)\sin^2 t}{n^2-\sin ^2t}\left(\sum\limits_{j=1}^{n-1}\left(\alpha_j^2+2\alpha_j\frac{(n+\sin t)}{\sin t(1-\sin t)}+\right.\right.
$$
$$
\left.\left.+\frac{(n+\sin t)^2}{\sin^2 t(1-\sin t)^2}\right)-\frac{1}{n-1}\sum\limits_{j\ne k=1}^{n-1}\left(\alpha_j\alpha_k+\left(\alpha_j+\alpha_k\right)\frac{(n+\sin t)}{\sin t(1-\sin t)}+\frac{(n+\sin t)^2}{\sin^2 t(1-\sin t)^2}\right)\right)=
$$
$$
=\left(\frac{n+\sin t}{1-\sin t}\right)^2- \frac{(n^2-1)\sin^2 t}{n^2-\sin ^2t}\left((n-1)-\frac{1}{n-1}(n-1)(n-2)\right)\frac{(n+\sin t)^2}{\sin^2 t(1-\sin t)^2}-
$$
$$
-\frac{(n^2-1)\sin^2 t}{n^2-\sin^2 t}\left(2\sum\limits_{j=1}^{n-1}\alpha_j-\frac{1}{n-1}\sum\limits_{j\ne k=1}^{n-1}\left(\alpha_j+\alpha_k\right)\right)\frac{(n+\sin t)}{\sin t(1-\sin t)}-
$$
$$
-\frac{(n^2-1)\sin^2 t}{n^2-\sin^2 t}\left(\sum\limits_{j=1}^{n-1}\alpha_j^2-\frac{1}{n-1}\sum\limits_{j\ne k=1}^{n-1}\alpha_j\alpha_k\right)=\frac{(n+\sin t)(1+\sin t)}{(n-\sin t)(1-\sin t)}-
$$
$$
-\frac{2(n+1)\sin t}{(n-\sin t)(1-\sin t)}\sum\limits_{j=1}^{n-1}\alpha_j-\frac{(n^2-1)\sin^2 t}{n^2-\sin ^2t} \left(\sum\limits_{j=1}^{n-1}\alpha_j^2-\frac{1}{n-1}\sum\limits_{j\ne k=1}^{n-1}\alpha_j\alpha_k\right).
$$
On the other hand obviously $\|z\|^2\geq 1$ and hence
$$
z_n^2\geq 1-\sum\limits_{i=1}^{n-1}\left(\sum\limits_{j=1}^{n-1}\alpha_j x_i^j\right)^2=1
-\frac{(n^2-1)\sin^2 t}{n^2-\sin^2 t}\left(\sum\limits_{j=1}^{n-1}\alpha_j^2 -\frac{1}{n-1}\sum\limits_{j\ne k=1}^{n-1}\alpha_j\alpha_k\right).
$$
These inequalities imply the assumption
\begin{equation}\label{assumptionzn}
1-|v|^2\leq z_n^2\leq \frac{(n+\sin t)(1+\sin t)}{(n-\sin t)(1-\sin t)}-
\frac{2(n+1)\sin t}{(n-\sin t)(1-\sin t)}\sum\limits_{j=1}^{n-1}\alpha_j-|v|^2,
\end{equation}
where $|v|^2=\frac{(n^2-1)\sin^2 t}{n^2-\sin ^2 t}\left(\sum\limits_{j=1}^{n-1}\alpha_j^2 -\frac{1}{n-1}\sum\limits_{j\ne k=1}^{n-1}\alpha_j\alpha_k\right)$ is the square of the Euclidean norm of the vector $v$.  Denote the quantity $\sum\limits_{j=1}^{n-1}\alpha_j$ by $\alpha (v)$. Then the volume is:
$$
V(\tau[n,t])=\sum\limits_{i=1}^{n}\int\limits_{v\in \epsilon_i[n-1,t]}\int\limits_{z_n=\sqrt{1-|v|^2}} ^{\sqrt{\frac{(n+\sin t)(1+\sin t)}{(n-\sin t)(1-\sin t)}-
\frac{2(n+1)\sin t}{(n-\sin t)(1-\sin t)}\alpha (v)-|v|^2}}z_n^{-n}\mathrm{d}z_n\mathrm{d}\rho.
$$
Since the set of the vectors $v$ belonging to more than one $\epsilon_i[n-1,t]$, has measure zero, the above equality can be written of the form:
\begin{equation}\label{pvolume}
V(\tau[n,t])=\frac{1}{n-1}\int\limits_{\epsilon[n-1,t]}\left(\frac{1}{\sqrt{1-\rho^2}}\right)^{n-1}\mathrm{d}\rho-
$$
$$
-\frac{1}{n-1}\int\limits_{\epsilon[n-1,t]}{\left(\frac{1}{\sqrt{\frac{(n+\sin t)(1+\sin t)}{(n-\sin t)(1-\sin t)}-
\frac{2(n+1)\sin t}{(n-\sin t)(1-\sin t)}\alpha(v)-\rho^2}}\right)}^{n-1}\mathrm{d}\rho,
\end{equation}
where $\rho^2=|v|^2$ and $\alpha(v)$ is uniquely determined by the vector $v$.

By formula (\ref{pvolume}) we can get an upper bound on the volume of $\tau[n,t]$. 
\begin{lemma}\label{lemmaupperb}
\begin{equation}\label{finalupperb}
V(\tau[n,t])\leq \frac{1}{n-1}\left(1-\left(\sqrt{\frac{n^2(1-\sin t)^2(1+\sin t)}{(n+\sin t)^2(1+\sin t)-(n^2-1)\sin ^2 t(1-\sin t)^2}}\right)^{n-1}\right)V(\tau_i[n,t]).
\end{equation}
\end{lemma}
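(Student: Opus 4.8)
The plan is to work directly from the volume formula (\ref{pvolume}), which writes $V(\tau[n,t])$ as a difference of two integrals over the same Euclidean simplex $\epsilon[n-1,t]$:
$$
V(\tau[n,t])=\frac{1}{n-1}\int\limits_{\epsilon[n-1,t]}(1-\rho^2)^{-\frac{n-1}{2}}\mathrm{d}\rho-\frac{1}{n-1}\int\limits_{\epsilon[n-1,t]}g(v)^{-\frac{n-1}{2}}\mathrm{d}\rho,
$$
where I abbreviate by
$$
g(v)=\frac{(n+\sin t)(1+\sin t)}{(n-\sin t)(1-\sin t)}-\frac{2(n+1)\sin t}{(n-\sin t)(1-\sin t)}\alpha(v)-\rho^2
$$
the radicand of the second integrand, i.e. the square of the upper limit in (\ref{assumptionzn}). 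The decisive structural observation is that $\epsilon[n-1,t]$ is a regular $(n-1)$-simplex of circumradius $\sin\alpha=\frac{\sin t\sqrt{1-1/n^2}}{\sqrt{1-\sin^2 t/n^2}}$, which is precisely the domain $(\tanh r_{n-1})S(n-1)$ of (\ref{ckvolumeofthefacets}); hence $V(\tau_i[n,t])=\int_{\epsilon[n-1,t]}(1-\rho^2)^{-n/2}\mathrm{d}\rho$ lives on the same set, and I would estimate both integrals above against this single reference integral.

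For the first integral I would use the pointwise bound $(1-\rho^2)^{-\frac{n-1}{2}}\le(1-\rho^2)^{-\frac{n}{2}}$, which holds simply because $1-\rho^2\le 1$; integrating and comparing with (\ref{ckvolumeofthefacets}) gives $\int_{\epsilon}(1-\rho^2)^{-\frac{n-1}{2}}\mathrm{d}\rho\le V(\tau_i[n,t])$, and this supplies the leading ``$1$'' in the factor $1-C^{n-1}$. For the second integral the natural move is to factor its integrand through that of the first one, writing $g(v)^{-\frac{n-1}{2}}=\big(\tfrac{1-\rho^2}{g(v)}\big)^{\frac{n-1}{2}}(1-\rho^2)^{-\frac{n-1}{2}}$. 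Everything then reduces to the uniform ratio inequality $\tfrac{1-\rho^2}{g(v)}\ge C^2$ for all $v\in\epsilon[n-1,t]$: granting it, one obtains $\int_\epsilon g^{-\frac{n-1}{2}}\ge C^{n-1}\int_\epsilon(1-\rho^2)^{-\frac{n-1}{2}}$, so that
$$
V(\tau[n,t])\le\frac{1-C^{n-1}}{n-1}\int\limits_{\epsilon[n-1,t]}(1-\rho^2)^{-\frac{n-1}{2}}\mathrm{d}\rho\le\frac{1-C^{n-1}}{n-1}V(\tau_i[n,t]),
$$
which is exactly (\ref{finalupperb}).

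The heart of the matter is therefore the ratio bound $\tfrac{1-\rho^2}{g(v)}\ge C^2$, which geometrically asserts that along every vertical line the squared height at which it meets the lower hemisphere $\|z\|=1$ is at least $C^2$ times the squared height at which it meets the bounding facet sphere of radius $\gamma_i=\frac{n+\sin t}{1-\sin t}$. Writing $g(v)=A-B\,\alpha(v)-\rho^2$ with $A=\frac{(n+\sin t)(1+\sin t)}{(n-\sin t)(1-\sin t)}$ and $B=\frac{2(n+1)\sin t}{(n-\sin t)(1-\sin t)}$, one has the clean identity $A-B=1$, so at a vertex of $\epsilon[n-1,t]$ (where $\alpha(v)=1$) the ratio equals $1$ and the content of the inequality is entirely interior. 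I would bound $g$ from above in terms of $\rho^2=|v|^2$ alone by exploiting the coupling that the quadratic form defining $|v|^2$ forces between $\alpha(v)=\sum_j\alpha_j$ and $|v|$, feed the extremal configuration of the $\alpha_j$ into $g$, and simplify using $\sin^2\alpha=\frac{(n^2-1)\sin^2 t}{n^2-\sin^2 t}$; this is what produces the explicit $C$ of the statement.

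The main obstacle I anticipate is precisely this last step. Since $g$ depends on the base point through \emph{both} $\alpha(v)$ and $|v|$, and these are coupled in a piecewise way over the subsimplices $\epsilon_i[n-1,t]$, locating the worst case of the two-variable expression $\frac{1-\rho^2}{A-B\alpha(v)-\rho^2}$ and checking that the stated closed form is a valid (even if not sharpest possible) lower bound for its minimum over the whole simplex is the only genuinely delicate computation; the remaining algebra that rewrites that bound as the radical in (\ref{ineqonvolumegrowthu}) is routine manipulation with the quantities $A$, $\gamma_i$ and $\sin^2\alpha$.
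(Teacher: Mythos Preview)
Your overall architecture is exactly the paper's: start from (\ref{pvolume}), bound the second integrand below by $C^{n-1}(1-\rho^2)^{-(n-1)/2}$, pull out the factor $1-C^{n-1}$, and finish with $(1-\rho^2)^{-(n-1)/2}\le(1-\rho^2)^{-n/2}$ together with (\ref{ckvolumeofthefacets}).

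The only place you diverge is in how you plan to obtain the uniform ratio bound, and there you are making the problem harder than it is. You anticipate a ``genuinely delicate'' two-variable optimisation coupling $\alpha(v)$ and $|v|$. The paper sidesteps this entirely with two one-line observations: first, since $\alpha(v)\ge 0$ and $B>0$ one simply drops the $B\,\alpha(v)$ term, obtaining $g(v)\le A-\rho^2$; second, since $A>1$, the map $x\mapsto (A-x)/(1-x)$ is increasing on $[0,1)$, so $(A-\rho^2)/(1-\rho^2)$ is maximised at $\rho^2=|v|^2_{\max}=\sin^2\alpha=\frac{(n^2-1)\sin^2 t}{n^2-\sin^2 t}$. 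This gives directly
\[
\frac{1-\rho^2}{g(v)}\ \ge\ \frac{1-\rho^2}{A-\rho^2}\ \ge\ \frac{1-|v|^2_{\max}}{A-|v|^2_{\max}}\ =\ C^2,
\]
and substituting the explicit values of $A$ and $|v|^2_{\max}$ yields the radical in (\ref{finalupperb}) after routine algebra. No coupling between $\alpha(v)$ and $|v|$ is ever used; the $\alpha(v)$ term is thrown away outright.
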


\begin{proof}
Clearly
$$
\sqrt{\frac{(n+\sin t)(1+\sin t)}{(n-\sin t)(1-\sin t)}-
\frac{2(n+1)\sin t}{(n-\sin t)(1-\sin t)}\alpha(v)-|v|^2}\leq \sqrt{\frac{(n+\sin t)(1+\sin t)}{(n-\sin t)(1-\sin t)}-|v|^2}
$$
and using the fact that if $\zeta> 1$ the function $f(x)=(\zeta -x)/(1-x)$ is strictly increasing for positive $x$, we get 
$$
\sqrt{\frac{(n+\sin t)(1+\sin t)}{(n-\sin t)(1-\sin t)}-|v|^2}\leq \sqrt{\frac{\frac{(n+\sin t)(1+\sin t)}{(n-\sin t)(1-\sin t)}-|v|^2_{max}}{1-|v|^2_{max}}}\left(\sqrt{1-|v|^2}\right).
$$
Since $v\in \epsilon[n-1,t]$ we have $|v|^2_{max}= h(r_{n-1})^2 =\left(\frac{\sqrt{n^2-1}\sin t}{\sqrt{n^2-\sin ^2t}}\right)^2$.
Since the radius of the circumscribed sphere of $\epsilon[n-1,t]$ is equal to $\sin\alpha=\frac{\sqrt{n^2-1}\sin t}{\sqrt{n^2-\sin ^2t}}$ we get 
$$
V(\tau[n,t])\leq \frac{1}{n-1}\left(1-\left(\sqrt{\frac{1-|v_{max}|^2}{\frac{(n+\sin t)(1+\sin t)}{(n-\sin t)(1-\sin t)}-|v_{max}|^2}}\right)^{n-1}\right)\int\limits_{\frac{\sqrt{n^2-1}\sin t}{\sqrt{n^2-\sin ^2t}}S(n-1)}\left(\frac{1}{\sqrt{1-|v|^2}}\right)^{n-1}\mathrm{d}|v|\leq
$$
$$
\leq \frac{1}{n-1}\left(1-\left(\sqrt{\frac{n^2(1-\sin t)^2(1+\sin t)}{(n+\sin t)^2(1+\sin t)-(n^2-1)\sin ^2 t(1-\sin t)^2}}\right)^{n-1}\right)\times
$$
$$
\times \int\limits_{\frac{\sqrt{n^2-1}\sin t}{\sqrt{n^2-\sin ^2t}}S(n-1)}\left(\frac{1}{\sqrt{1-|v|^2}}\right)^{n}\mathrm{d}|v|,
$$
and using (\ref{ckvolumeofthefacets}) we get
$$
V(\tau[n,t])\leq \frac{1}{n-1}\left(1-\left(\sqrt{\frac{n^2(1-\sin t)^2(1+\sin t)}{(n+\sin t)^2(1+\sin t)-(n^2-1)\sin ^2 t(1-\sin t)^2}}\right)^{n-1}\right)V(\tau_i[n,t]),
$$
as we stated.
\end{proof}

\begin{remark}\label{remgenvolume} An analogue of (\ref{pvolume}) can be determined in such a case when the simplex $\mathcal{F}$ is not regular. If we assume that the circumcenter $h^{-1}(\widetilde{O})$, the vertex $h^{-1}(\widetilde{E_{n+1}})$ and the center $h^{-1}(\widetilde{K_{n-1}})$ of the facet $\mathcal{F}_{n+1}$ are collinear and $h^{-1}(\widetilde{K_{n-1}})$ is an inner point of $\mathcal{F}_{n+1}$ then the half-space model representation of $\mathcal{F}$ is very similar to the regular case. We have to take distinction only at those points in the calculation when (firstly) we determined and used the Gramm matrix $R(n-1)$ of the regular vector system, and (secondly) when we determined the distances of the points $\widetilde{K_{n-1}}$,$\widetilde{O}$ and $\widetilde{E_{n+1}}$, respectively. Let denote $\rho(h^{-1}(\widetilde{K_{n-1}}),h^{-1}(\widetilde{O}))$ by $d$ and $\rho(h^{-1}(\widetilde{O}),h^{-1}(\widetilde{E_{n+1}}))$ by $r$, respectively. As in the regular case we can determine the coordinate $z_n$ of a point $z=v+z_ne_n, \quad v\in \epsilon_i[n-1,t]$ . It is bounded above with the value $(e^{r+d}+1)^2-e^{r+d}(e^{r+d}+2)\alpha(v)-|v|^2$ where $\alpha(v)$ is the same quantity as in the regular case. We denote by $\varepsilon[n-1]$ the orthogonal projection of $h(\mathcal{F}_{n+1})$ to $H$. The general formula now is
\begin{equation}\label{genpvolume}
V(\mathcal{F})=\frac{1}{n-1}\int\limits_{\varepsilon[n-1]}\left(\left(\frac{1}{\sqrt{1-\rho^2}}\right)^{n-1}- \left(\frac{1}{\sqrt{(e^{r+d}+1)^2-e^{r+d}(e^{r+d}+2)\alpha(v)-\rho^2}}\right)^{n-1}\right)\mathrm{d}\rho.
\end{equation}
\end{remark}
From this formula immediately follows an upper bound on the volume is similar to (\ref{finalupperb}).

\end{document}